\author{Hui Zhou
\\ \footnotesize{Department of Mathematics, Southern University of Science and Technology,}
\\ \footnotesize{Shenzhen, 518055, P.~R.~China}
\\ \footnotesize{School of Mathematical Sciences, Peking University, Beijing, 100871, P.~R.~China}
\\ \footnotesize{zhouhpku17@pku.edu.cn}
\vspace{2em}
\\ Rongquan Feng
\\ \footnotesize{School of Mathematical Sciences, Peking University, Beijing, 100871, P.~R.~China}
\\ \footnotesize{fengrq@math.pku.edu.cn}
}
\title{\Large On distance matrices of distance-regular graphs
}
\def\j{{\mathbf{j}}}
\def\deg{{\sf deg}}
\newtheorem{theorem}{Theorem}
\newtheorem{lemma}{Lemma}
\newtheorem{hypothesis}{Hypothesis}
\newcommand*{\QEDA}{\hfill\ensuremath{\blacksquare}}  
\newenvironment{proof}[1][\hspace{2ex}\textbf{\textit{Proof}.}\hspace{1ex}]{\begin{trivlist}\item[\hskip \labelsep {\bfseries #1}]}{\QEDA\end{trivlist}}
\begin{document}

\maketitle

\begin{abstract}
In this paper, we give a characterization of distance matrices of distance-regular graphs to be invertible.
\end{abstract}

\textbf{Keywords}: Distance matrices; Distance-regular graphs; Strongly-regular graphs.

\textbf{MSC}: 05C50, 05C12, 05E30.



\section{Intrroduction}\label{sec-Introduction}

The study on graphs and matrices is an important topic in algebraic graph theory. The determinant and inverse of the distance matrix of a graph is of great interest. This kind of research is initialed by Graham and Pollak~\cite{Graham Pollak on addressing problem LS} on the determinant of the distance matrix of a tree. Later, Graham, Hoffman and Hosoya~\cite{Graham DM of a directed graph} gave an attractive theorem expressing the determinant of the distance matrix of a graph through that of its blocks, and Graham and Lovasz~\cite{Graham DM Polynomials of trees} calculated the inverse of the distance matrix of a tree.

Recent years, a lot of research has been done on the determinant and the inverse of the distance matrix of a graph, such as: the block graph~\cite{Bapat inv of DM of a block graph}, the odd-cycle-clique graph~\cite{Hou Fang Sun Inverse of cycle-clique graph}, the bi-block graph~\cite{Hou Sun Inverse of bi-block graph}, the multi-block graph~\cite{Zhou2017DMdistance-wellDefined}, the mixed block graph~\cite{ZhouDing2017MixedBlockGraphs}, the weighted tree~\cite{Bapat on DM and Lap}, the bidirected tree~\cite{Bapat bidirected tree}, the arc-weighted tree~\cite{ZhouDing2016DMweightedTree}, the cactoid digraph~\cite{Hou Chen Inverse of cactoid graph}, the weighted cactoid digraph~\cite{ZhouDing2017DMweightedCactoidDigraph}, etc.

Distance-regular graphs are very important in algebraic graph theory, and they have very beautiful combinatorial properties. In this paper, we first give the inverse of the distance matrix of a strongly-regular graph by some tricky calculations in Section~\ref{sec-srg-dm}. Then by the theory of linear systems, we give characterizations of distance matrices of distance-regular graphs to be invertible in Section~\ref{sec-LinearSys}, and by using this result we give the inverse of the distance matrix of a strongly-regular graph in a systematic method.


\section{Preliminaries}\label{sec-preliminaries}

Let $\j$ be an appropriate size column vector whose entries are ones, let $J$ be an appropriate size matrix whose entries are ones, let $\mathbf{0}$ be an appropriate size matrix whose entries are zeroes, and let $I$ be the identity matrix with an appropriate size. For any matrix $M$ and any column vector $\alpha$, we use $M^T$ and $\alpha^T$ to denote their transposes. Let $A=(a_{ij})$ be an $n\times n$ matrix. We denote its determinant by $\det(A)$ and $A_{ij}$ the $(i,j)$-entry $a_{ij}$ of $A$. 

Let $G$ be a connected graph. We use $V(G)$ and $E(G)$ to denote the vertex set and edge set of $G$, respectively. The distance $\partial_G(u,v)$ from vertex $u$ to vertex $v$ in $G$ is the length (number of edges) of the shortest path from $u$ to $v$ in $G$. The distance matrix $D$ of $G$ is a $|V(G)|\times |V(G)|$ square matrix whose $(u,v)$-entry is the distance $\partial_G(u,v)$, that is $D=(\partial_G(u,v))_{u,v\in V(G)}$. Let $v$ be a vertex of $G$. For any integer $i\geqslant 0$, we use $G_i(v)$ to denote the set of vertices $w$ satisfying $\partial_G(v,w)=i$. The degree of $v$ is the number of adjacent vertices, i.e., $\deg_G(v)=|G_1(v)|$. The graph $G$ is called regular with degree $k$, or $k$-regular, if for any vertex $u$ of $G$, the degree $\deg_G(u)=k$. A cut vertex of $G$ is a vertex whose deletion results in a disconnected graph. A block of $G$ is a connected subgraph on at least two vertices such that it has no cut vertices and is maximal with respect to this property.

A complete graph is a graph, in which every pair of vertices are adjacent, and an empty graph is a graph with no edges. Let $G$ be a regular graph, which is neither complete nor empty. Then $G$ is said to be strongly-regular with parameters ($n$, $k$, $a$, $c$), if it is a $k$-regular graph with $n$ vertices, every pair of adjacent vertices has $a$ common neighbours, and every pair of distinct non-adjacent vertices has $c$ common neighbours. 

A connected regular graph $G$ with degree $k\geqslant 1$ and diameter $d\geqslant 1$ is called a distance-regular graph if there are natural numbers \begin{center}$b_0=k,b_1,\ldots,b_{d-1},~c_1=1,c_2,\ldots,c_d,$\end{center} such that for each pair of vertices $u$ and $v$ with $\partial_G(u,v)=j$ we have:
\begin{enumerate}
\item[(1)] the number of vertices in $G_1(u)\cap G_{j-1}(v)$ is $c_j$ ($1\leqslant j\leqslant d$);
\item[(2)] the number of vertices in $G_1(u)\cap G_{j+1}(v)$ is $b_j$ ($0\leqslant j\leqslant d-1$).
\end{enumerate}
The array $\{b_0=k,b_1,\ldots,b_{d-1};c_1=1,c_2,\ldots,c_d\}$ is called the intersection array of $G$, and it is denoted by $\iota(G)$. For $1\leqslant i\leqslant d-1$, let $a_i=k-b_i-c_i$, and let $a_d=k-c_d$. The intersection array is also written as
\begin{equation*}
\iota(G)=\left\{\begin{array}{cccccc}
               *     & c_1=1 & c_2 & \cdots & c_{d-1} & c_d\\
               a_0=0 & a_1 & a_2 & \cdots & a_{d-1} & a_d\\
               b_0=k & b_1 & b_2 & \cdots & b_{d-1} & *
               \end{array}\right\}.
\end{equation*}

Let $G$ be a distance-regular graph with valency $k$ and diameter $d$. Let $A$ be the adjacency matrix of $G$, and let $D$ be the distance matrix of $G$. The $i$-distance matrices $A_i$ $(0\leqslant i\leqslant d)$ of $G$ are defined as follows: \begin{center}$(A_i)_{uv}=1$, if $\partial_G(u,v)=i$; $(A_i)_{uv}=0$, otherwise.\end{center} We have \begin{center}$A_0=I$, $A_1=A$, $A\j=k\j$ and $A_0+A_1+A_2+\ldots +A_d=J$.\end{center} By definition, we know $D=\sum\limits_{i=0}^diA_i=A_1+2A_2+\ldots+dA_d$.


\section{Strongly-regular graphs}\label{sec-srg-dm}

By definitions, we know that a strongly-regular graph is a distance-regular graph with diameter two. In this section, we study the distance matrices of strongly-regular graphs.

Let $G$ be a strongly-regular graph with parameters $(n,k,a,c)$. Let $D$ be the distance matrix of $G$, and let $A$ be the adjacency matrix of $G$. Then \begin{equation}\label{eqn-D=JIA}D=2(J-I)-A\end{equation} (this equation also holds for complete graphs). By Lemma~2.5 in~\cite{AGT-Biggs}, $(A^2)_{uv}$ is the number of walks of length two connecting $u$ and $v$. By counting walks of length two, we have \begin{equation}\label{eqn-A2=cJ}A^2+(c-a)A+(c-k)I=cJ.\end{equation} Thus $D$ is a polynomial of $A$ with degree two and \begin{center}$D=\frac{2}{c}A^2+(1-\frac{2a}{c})A-\frac{2k}{c}I=\alpha(A)$\end{center} where $\alpha(x)=\frac{2}{c}x^2+(1-\frac{2a}{c})x-\frac{2k}{c}=-x-2+\frac{2}{c}[x^2+(c-a)x+(c-k)]$. Since $D=A+2A_2$, we get \begin{center}$A_2=\frac{1}{c}(A^2-aA-kI)$.\end{center} The eigenvalues of $A$ are $k$ of multiplicity $1$, $\theta$ of multiplicity $m_\theta$ and $\tau$ of multiplicity $m_\tau$ where $\theta$ and $\tau$ are the roots of $x^2+(c-a)x+(c-k)=0$ and the multiplicities satisfying $m_\theta+m_\tau=n-1$ and $k+\theta m_\theta+\tau m_\tau=0$ (since the trace is zero). By calculations, we find
\begin{eqnarray*}
\theta &=&\frac{1}{2}\biggl[(a-c)+\sqrt{(a-c)^2+4(k-c)}\biggr],\\
\tau &=&\frac{1}{2}\biggl[(a-c)-\sqrt{(a-c)^2+4(k-c)}\biggr],\\
m_\theta &=&\frac{1}{2}\biggl[(n-1)-\frac{2k+(n-1)(a-c)}{\sqrt{(a-c)^2+4(k-c)}}\biggr],\\
m_\tau &=&\frac{1}{2}\biggl[(n-1)+\frac{2k+(n-1)(a-c)}{\sqrt{(a-c)^2+4(k-c)}}\biggr].
\end{eqnarray*}
Thus the eigenvalues of $D$ are $\alpha(k)=2n-k-2$ of multiplicity $1$, $\alpha(\theta)=-\theta-2$ of multiplicity $m_\theta$ and $\alpha(\tau)=-\tau-2$ of multiplicity $m_\tau$. The determinant of $D$ is $\det(D)=(2n-k-2)(-1)^{n-1}(\theta+2)^{m_\theta}(\tau+2)^{m_\tau}$. The distance matrix $D$ is not invertible if and only if $(\theta+2)(\tau+2)=2a+4-k-c=0$, i.e., \begin{equation}\label{eqn-srg-invertible}k+c=2a+4.\end{equation}

Note that $AJ=JA=kJ$, we have \begin{center}$J(A-kI)=\mathbf{0}$.\end{center} Since $J$ is a polynomial of $A$ with degree two, this means
\begin{eqnarray*}
\beta(A)&=&cJ(A-kI)\\
&=&A^3+(c-a-k)A^2+(c-k-kc+ka)A-k(c-k)I\end{eqnarray*}
is the minimal polynomial of $A$. When $k\neq c$, we have \begin{center}$A^{-1}=\frac{1}{k(c-k)}\Bigl[A^2+(c-k-a)A+(c-k+ka-kc)I\Bigr]$.\end{center} Note that the condition $k=c$ holds if and only if the graph G is the complete multipartite graph $K_{m[b]}$ where $m\geqslant 2$, $b\geqslant 2$ and $mb=n$. Since $A\j=k\j$, by Equation~(\ref{eqn-A2=cJ}), we have \begin{equation}\label{eqn-nkac}k(k-a-1)=c(n-k-1).\end{equation}

Suppose $k+c\neq 2a+4$, then $D$ is invertible. We now find the inverse of $D$. By Equation~(\ref{eqn-nkac}), we have $c(2n-k-2)=k(2k+c-2a-2)$. Let
\begin{eqnarray*}
\lambda &=& k+c-2a-4,\\
\mu &=& 2k+c-2a-2,\\
\delta &=& 2k+c-2a-4,\\
f &=& \frac{\delta}{k\lambda \mu}.
\end{eqnarray*}
(Note that $k\geqslant a+1$ and $c\geqslant 1$, so $2k+c-2a-2>0$. When $k=a+1$, then the graph is the complete graph. So for non-complete graphs, we have $k\geqslant a+2$, and in this case $2k+c-2a-4>0$ which implies $f\neq 0$.) By Equation~(\ref{eqn-D=JIA}), we have
\begin{eqnarray*}
\lambda f c DJ &=& \frac{c\delta}{k\mu}(2J-2I-A)J\\
&=& \frac{c\delta}{k\mu}(2n-k-2)J\\
&=&\delta J.
\end{eqnarray*}
By Equation~(\ref{eqn-A2=cJ}), we have
\begin{eqnarray*}
&&D [A-(2+a-c)I]\\
&=& 2(k+c-a-2)J-\lambda I-[A^2+(c-a)A]+(c-k)I]\\
&=& \delta J-\lambda I.
\end{eqnarray*}
Hence \begin{equation*}\lambda fcDJ+D[(2+a-c)I-A]=\lambda I\end{equation*} which implies
\begin{equation}\label{eqn-inv-SRG}D^{-1}=fcJ+\frac{(2+a-c)I-A}{\lambda}=\frac{c\delta}{k\lambda\mu}J+\frac{2+a-c}{\lambda}I-\frac{1}{\lambda}A.\end{equation}
By the above discussion, we have the following result.

\begin{theorem}\label{thm-srg-inv}
Let $G$ be a strongly-regular graph with parameters $(n,k,a,c)$. Let $D$ be the distance matrix of $G$, and let $A$ be the adjacency matrix of $G$. Then $D$ is invertible if and only if \begin{equation}\label{eqn-srg-invertible-thm}k+c\neq 2a+4,\end{equation} and when $D$ is invertible the inverse is \begin{equation}\label{eqn-srg-inv}D^{-1}=\frac{(2+a-c)I-A}{k+c-2a-4}+\frac{c(2k+c-2a-4)}{k(k+c-2a-4)(2k+c-2a-2)}J.\end{equation}
\end{theorem}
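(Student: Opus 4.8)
The plan is to treat the two assertions of the theorem separately: first the singular/non-singular dichotomy of Equation~(\ref{eqn-srg-invertible-thm}), and then the closed form for $D^{-1}$ in Equation~(\ref{eqn-srg-inv}).

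For the invertibility criterion I would argue spectrally. Equation~(\ref{eqn-A2=cJ}) exhibits $J$ as a quadratic polynomial in $A$, so $D=\alpha(A)$ is a polynomial in $A$ and the eigenvalues of $D$ are the images under $\alpha$ of the three eigenvalues $k,\theta,\tau$ of $A$. The eigenvalue on the all-ones vector is $\alpha(k)=2n-k-2$, which is strictly positive for a non-complete strongly-regular graph since such a graph has $n\geqslant k+2$. Hence $D$ is singular exactly when one of $\alpha(\theta)=-\theta-2$ or $\alpha(\tau)=-\tau-2$ vanishes, that is, when $(\theta+2)(\tau+2)=0$. Because $\theta$ and $\tau$ are the roots of $x^2+(c-a)x+(c-k)=0$, Vieta's relations give $\theta+\tau=a-c$ and $\theta\tau=c-k$, so $(\theta+2)(\tau+2)=\theta\tau+2(\theta+\tau)+4=2a+4-k-c$. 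Therefore $D$ is singular if and only if $k+c=2a+4$; equivalently, $D$ is invertible precisely under Equation~(\ref{eqn-srg-invertible-thm}).

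For the inverse formula I would assume $k+c\neq 2a+4$ and verify Equation~(\ref{eqn-srg-inv}) by multiplying $D$ into the claimed right-hand side. The computation rests on two auxiliary products. First, because $J(A-kI)=\mathbf{0}$, the product $DJ$ collapses to the scalar multiple $(2n-k-2)J$. Second, eliminating $A^2$ through Equation~(\ref{eqn-A2=cJ}), the product $D[(2+a-c)I-A]$ collapses to $\lambda I-\delta J$, where $\lambda=k+c-2a-4$ and $\delta=2k+c-2a-4$. Scaling the first identity by $fc=c\delta/(k\lambda\mu)$ with $\mu=2k+c-2a-2$ turns $fc\,DJ$ into $\tfrac{\delta}{\lambda}J$, and this cancels exactly the $-\tfrac{\delta}{\lambda}J$ produced when the second identity is divided by $\lambda$; what survives is $I$, which is the content of Equation~(\ref{eqn-srg-inv}).

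The crux of the second part, and the step I expect to be the main obstacle, is matching the coefficient of $J$. This is exactly where the parameter identity $k(k-a-1)=c(n-k-1)$ of Equation~(\ref{eqn-nkac}) is indispensable: rearranging it gives $c(2n-k-2)=k\mu$, which is what converts $fc(2n-k-2)J$ into the clean multiple $\tfrac{\delta}{\lambda}J$ that makes the cancellation work. Before invoking the formula I would also record that every denominator is nonzero: $k\neq 0$ is clear, $\mu>0$ follows from $k\geqslant a+1$ and $c\geqslant 1$, and $\lambda\neq 0$ is exactly the standing hypothesis $k+c\neq 2a+4$. Combined with the spectral dichotomy above, this verification completes both claims of the theorem.
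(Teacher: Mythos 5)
Your proposal is correct and takes essentially the same route as the paper: the same spectral argument (eigenvalues $\alpha(k)=2n-k-2$, $\alpha(\theta)=-\theta-2$, $\alpha(\tau)=-\tau-2$ and the computation $(\theta+2)(\tau+2)=2a+4-k-c$) for the invertibility criterion, and the same two identities $DJ=(2n-k-2)J$ and $D[(2+a-c)I-A]=\lambda I-\delta J$ combined with the parameter relation $c(2n-k-2)=k\mu$ from Equation~(\ref{eqn-nkac}) for the inverse. The only cosmetic difference is that the paper assembles these identities into $\lambda fcDJ+D[(2+a-c)I-A]=\lambda I$ to derive the formula, while you verify the stated formula by direct multiplication; the underlying algebra is identical.
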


Now we give an example. Let $G$ be the cycle of length $5$. Then $G$ is a strongly-regular graph with parameter $(n,k,a,c)=(5,2,0,1)$. We have $k+c\neq 2a+4$ and $(\lambda,\mu,\delta,f)=(-1,3,1,-\frac{1}{6})$. The adjacency matrix of $G$ is
\begin{center}$A=\left(
     \begin{array}{ccccc}
       0 & 1 & 0 & 0 & 1 \\
       1 & 0 & 1 & 0 & 0 \\
       0 & 1 & 0 & 1 & 0 \\
       0 & 0 & 1 & 0 & 1 \\
       1 & 0 & 0 & 1 & 0 \\
     \end{array}
   \right)$\end{center}
and the distance matrix of $G$ is
\begin{center}$D=2(J-I)-A=\left(
     \begin{array}{ccccc}
       0 & 1 & 2 & 2 & 1 \\
       1 & 0 & 1 & 2 & 2 \\
       2 & 1 & 0 & 1 & 2 \\
       2 & 2 & 1 & 0 & 1 \\
       1 & 2 & 2 & 1 & 0 \\
     \end{array}
   \right).$\end{center}
The inverse of $D$ is
\begin{eqnarray*}D^{-1}&=&\frac{1}{6}\left(
                     \begin{array}{ccccc}
                       -7 & 5 & -1 & -1 & 5 \\
                       5 & -7 & 5 & -1 & -1 \\
                       -1 & 5 & -7 & 5 & -1 \\
                       -1 & -1 & 5 & -7 & 5 \\
                       5 & -1 & -1 & 5 & -7 \\
                     \end{array}
                   \right)\\ &=&-\frac{1}{6}J+A-I=fcJ+\frac{(2+a-c)I-A}{\lambda}.\end{eqnarray*}
This coincides with the formula in Theorem~\ref{thm-srg-inv}.


\section{Distance-regular graphs}\label{sec-drg-dm}

Let $G$ be a distance-regular graph with valency $k$ and diameter $d$. Suppose the intersection array of $G$ is \begin{center}$\iota(G)=\{b_0=k,b_1,\ldots,b_{d-1};c_1=1,c_2,\ldots,c_d\}$.\end{center} Let $A$ be the adjacency matrix of $G$, and let $D$ be the distance matrix of $G$. For $1\leqslant i\leqslant d-1$, let $a_i=k-b_i-c_i$, and let $a_d=k-c_d$. By Lemma~20.6 in~\cite{AGT-Biggs}, we have $AA_0=A$, \begin{equation}\label{eqn-AAi}AA_i=b_{i-1}A_{i-1}+a_iA_i+c_{i+1}A_{i+1}~(1\leqslant i\leqslant d-1)\end{equation} and $AA_d=b_{d-1}A_{d-1}+a_dA_d$.

By convention, we let $b_d=c_0=0$. By simple calculation, we have
\begin{eqnarray*}
AD&=&\sum\limits_{i=1}^{d-1}i(b_{i-1}A_{i-1}+a_iA_i+c_{i+1}A_{i+1})+d(b_{d-1}A_{d-1}+a_dA_d)\\
&=&b_0A_0+\sum\limits_{i=1}^{d-1}[(i-1)c_i+ia_i+(i+1)b_i]A_i+[(d-1)c_d+da_d]A_d\\
&=&b_0A_0+\sum\limits_{i=1}^{d-1}[ki+(b_i-c_i)]A_i+(kd-c_d)A_d\\
&=&kD+\sum\limits_{i=0}^d(b_i-c_i)A_i.
\end{eqnarray*}

Additionally, we let $a_0=0$, $c_{d+1}=b_{d+1}=0$ and $b_{-1}=c_{-1}=0$. For $-1\leqslant i\leqslant d+1$, we let $\delta_i=b_i-c_i$. Then we have  $\delta_{-1}=\delta_{d+1}=0$ and $AD=kD+\sum\limits_{i=0}^d\delta_iA_i$. If we suppose $A_{-1}=A_{d+1}=\mathbf{0}$, then we have \begin{center}$AA_i=b_{i-1}A_{i-1}+a_iA_i+c_{i+1}A_{i+1}$ for $0\leqslant i\leqslant d$.\end{center} Let $X_0=D=\sum\limits_{i=0}^diA_i$ and $X_1=\sum\limits_{i=0}^d\delta_iA_i$. We have $AX_0=kX_0+X_1$. Similarly, we have
\begin{eqnarray*}
AX_1&=&\sum\limits_{i=0}^d\delta_i(b_{i-1}A_{i-1}+a_iA_i+c_{i+1}A_{i+1})\\
&=&\sum\limits_{i=0}^d(\delta_{i-1}c_i+\delta_ia_i+\delta_{i+1}b_i)A_i\\
&=&k\sum\limits_{i=0}^d\delta_iA_i+\sum\limits_{i=0}^d((\delta_{i+1}-\delta_i)b_i-(\delta_i-\delta_{i-1})c_i)A_i.
\end{eqnarray*}
For $0\leqslant i\leqslant d$, let $\zeta_i=(\delta_{i+1}-\delta_i)b_i-(\delta_i-\delta_{i-1})c_i$. Let $X_2=\sum\limits_{i=0}^d\zeta_iA_i$. Then we have $AX_1=kX_1+X_2$. So $A^2X_0=k^2X_0+2kX_1+X_2$. Suppose $\zeta_{-1}=\zeta_{d+1}=0$. By calculation, we have
\begin{eqnarray*}
AX_2&=&\sum\limits_{i=0}^d\zeta_i(b_{i-1}A_{i-1}+a_iA_i+c_{i+1}A_{i+1})\\
&=&\sum\limits_{i=0}^d(\zeta_{i-1}c_i+\zeta_ia_i+\zeta_{i+1}b_i)A_i\\
&=&k\sum\limits_{i=0}^d\zeta_iA_i+\sum\limits_{i=0}^d((\zeta_{i+1}-\zeta_i)b_i-(\zeta_i-\zeta_{i-1})c_i)A_i.
\end{eqnarray*}
For $0\leqslant i\leqslant d$, let $\phi_i=(\zeta_{i+1}-\zeta_i)b_i-(\zeta_i-\zeta_{i-1})c_i$. Let $X_3=\sum\limits_{i=0}^d\phi_iA_i$. Then we have $AX_2=kX_2+X_3$. So $A^3X_0=k^3X_0+3k^2X_1+3kX_2+X_3$.

We can calculate $AX_i$ and $A^iX_0$ for $0\leqslant i\leqslant d$ in a similar way. These properties between $X_i$'s can be extended to a general case in the next section.


\section{Linear systems}\label{sec-LinearSys}

By the properties of $i$-distance matrices ($0\leqslant i\leqslant d$) of distance-regular graphs, we consider the following matrices $A_0,A_1,\ldots,A_d$ satisfying properties the same as $i$-distance matrices of distance-regular graphs. Then we analyze these matrices by linear systems to give characterizations of the matrix $X$ (corresponding to the distance matrix of a distance-regular graph) to be invertible. First, we give some notations in the following hypothesis.

\begin{hypothesis}\label{hypo-linearSys}
Let $d\geqslant 2$ and $k\geqslant 3$.
\begin{enumerate}
\item[(1)] Let
$\iota=\left\{\begin{array}{cccccccc}
               c_{-1}=0 &  c_0=0 & c_1=1 & c_2 & \cdots & c_{d-1} & c_d   & c_{d+1}=0\\
               a_{-1}   &  a_0=0 & a_1   & a_2 & \cdots & a_{d-1} & a_d   & a_{d+1}\\
               b_{-1}=0 &  b_0=k & b_1   & b_2 & \cdots & b_{d-1} & b_d=0 & b_{d+1}=0
               \end{array}\right\}
$ 
be an array satisfying $k=c_i+a_i+b_i$ for $-1\leqslant i\leqslant d+1$ and $1=c_1\leqslant c_2\leqslant\cdots\leqslant c_d\leqslant k$, $k=b_0\geqslant b_1\geqslant \cdots\geqslant b_{d-1}\geqslant 1$.
\item[(2)] Let $A_{-1}=\mathbf{0},A_0,A_1,\ldots,A_d,A_{d+1}=\mathbf{0}$ be a series of matrices satisfying $A_0=I$, $A_1=A$, $A\j=k\j$, the minimal polynomial of $A$ is of degree at least $d+1$, $J=A_0+A_1+\ldots+A_d$ and \begin{equation}\label{eqn-AAi-d}AA_i=b_{i-1}A_{i-1}+a_iA_i+c_{i+1}A_{i+1} \text{ for $0\leqslant i\leqslant d$}.\end{equation}
\item[(3)] Let $\{x_{i,j}\mid 0\leqslant i\leqslant d, ~-1\leqslant j\leqslant d+1\}$ be a series of numbers satisfying \begin{center}$x_{i,-1}=x_{i,d+1}=0$ for $0\leqslant i\leqslant d$, and\end{center} \begin{center}$x_{i+1,j}=(x_{i,j+1}-x_{i,j})b_j-(x_{i,j}-x_{i,j-1})c_j$\end{center} for $0\leqslant i\leqslant d-1$ and $0\leqslant j\leqslant d$.
\item[(4)] Let $X_i=\sum\limits_{j=0}^dx_{i,j}A_j$ for $0\leqslant i\leqslant d$.
\item[(5)] Let $X=X_0=\sum\limits_{j=0}^dx_{0,j}A_j$.
\item[(6)] Let $Q=(x'_{i,j})_{0\leqslant i\leqslant d,~0\leqslant j\leqslant d}$ where $x'_{i,j}=x_{j,i}$ for $0\leqslant i\leqslant d$ and $0\leqslant j\leqslant d$.
\item[(7)] Let $\alpha_{h,i}=\sum\limits_{j=0}^i\binom{i}{j}k^{i-j}x_{j,h}$ for $0\leqslant i\leqslant d$ and $0\leqslant h\leqslant d$.
\end{enumerate}
\end{hypothesis}

Note that, by Proposition~2.6 in~\cite{AGT-Biggs}, here we suppose in (2) of the above hypothesis that the minimal polynomial of $A$ is of degree at least $d+1$.

\begin{lemma}\label{lem-mim-poly-degree-i}
We use notations as in Hypothesis~\ref{hypo-linearSys}. Then $A_i$ is a polynomial in $A$ of degree $i$ for $0\leqslant i\leqslant d$, $J$ is a polynomial $f(A)$ in $A$ of degree $d$, and the minimal polynomial of $A$ is $\mu(A)=\left(\prod\limits_{i=1}^dc_i\right)f(A)(A-kI)$ of degree $d+1$.
\end{lemma}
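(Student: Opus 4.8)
The plan is to prove the three claims in sequence, since each feeds into the next. The key recurrence is Equation~(\ref{eqn-AAi-d}), namely $AA_i=b_{i-1}A_{i-1}+a_iA_i+c_{i+1}A_{i+1}$ for $0\leqslant i\leqslant d$, together with the boundary conditions $A_{-1}=A_{d+1}=\mathbf{0}$. First I would show by induction on $i$ that $A_i$ is a polynomial in $A$ of degree exactly $i$. The base cases $A_0=I$ (degree $0$) and $A_1=A$ (degree $1$) are immediate. For the inductive step, I would solve the recurrence for the highest-index term: since $c_{i+1}\geqslant 1>0$ for $0\leqslant i\leqslant d-1$, I can write
\begin{equation*}
A_{i+1}=\frac{1}{c_{i+1}}\bigl(AA_i-a_iA_i-b_{i-1}A_{i-1}\bigr).
\end{equation*}
By the inductive hypothesis $A_i$ is a polynomial of degree $i$ and $A_{i-1}$ of degree $i-1$, so $AA_i$ is a polynomial of degree $i+1$, and the leading term cannot be cancelled by the lower-degree pieces; hence $A_{i+1}$ is a polynomial in $A$ of degree exactly $i+1$. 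This establishes the first claim for all $0\leqslant i\leqslant d$.

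\emph{For the second claim,} I would use the relation $J=A_0+A_1+\cdots+A_d$ from part (2) of the hypothesis. Each $A_j$ is a polynomial in $A$ of degree $j$, so the sum is a polynomial $f(A)$ in $A$; and because $A_d$ contributes the unique term of top degree $d$ while all other summands have degree strictly less than $d$, the leading term survives and $f$ has degree exactly $d$. Thus $J=f(A)$ with $\deg f=d$.

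\emph{The third claim is the heart of the lemma.} I would identify the minimal polynomial explicitly. Two facts drive this: the relation $AJ=JA=kJ$ (which follows from $A\j=k\j$ and symmetry, giving $J(A-kI)=\mathbf{0}$), and the computation of the leading coefficient of $f$. Substituting $J=f(A)$ into $f(A)(A-kI)=\mathbf{0}$ shows that the polynomial $f(x)(x-kI)$ annihilates $A$; since it has degree $d+1$ and the hypothesis in part (2) guarantees the minimal polynomial of $A$ has degree at least $d+1$, this annihilating polynomial must be (a scalar multiple of) the minimal polynomial. It remains to pin down the leading coefficient of $f$, which is the leading coefficient of $A_d$ as a polynomial in $A$. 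Tracking the recurrence above, the leading coefficient of $A_{i+1}$ equals that of $A_i$ divided by $c_{i+1}$; starting from leading coefficient $1$ for $A_0$, an easy induction gives that the leading coefficient of $A_i$ is $\bigl(\prod_{j=1}^{i}c_j\bigr)^{-1}$, so $f$ has leading coefficient $\bigl(\prod_{j=1}^{d}c_j\bigr)^{-1}$. Therefore the monic annihilating polynomial is $\bigl(\prod_{j=1}^{d}c_j\bigr)f(x)(x-k)$, and after matching conventions this is exactly $\mu(A)=\bigl(\prod_{i=1}^{d}c_i\bigr)f(A)(A-kI)$ of degree $d+1$, as claimed.

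\emph{The main obstacle} I anticipate is the bookkeeping of leading coefficients through the recurrence and confirming that no unexpected cancellation occurs at the top degree. The crucial structural input making this clean is that $c_{i+1}\geqslant 1$ throughout, so division by $c_{i+1}$ is always legitimate and the degree genuinely increments at each step; without this the degree count could collapse. A secondary point requiring care is the direction of the minimal-polynomial argument: the degree lower bound from part (2) of the hypothesis is what upgrades ``$f(A)(A-kI)$ annihilates $A$'' to ``$f(A)(A-kI)$ is, up to scalar, the minimal polynomial,'' so I would make sure that hypothesis is invoked explicitly rather than merely assumed.
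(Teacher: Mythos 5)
Your proposal is correct and takes essentially the same route as the paper: express each $A_i$ recursively as a polynomial in $A$ of degree $i$ (dividing by $c_{i+1}>0$), conclude $J=f(A)$ has degree $d$, use $A\j=k\j$ to get $f(A)(A-kI)=\mathbf{0}$, and invoke the degree-$(d+1)$ lower bound on the minimal polynomial from Hypothesis~\ref{hypo-linearSys}(2). Your write-up merely supplies details the paper leaves implicit, notably the leading-coefficient bookkeeping $\bigl(\prod_{j=1}^{i}c_j\bigr)^{-1}$ that explains the normalizing factor $\prod_{i=1}^{d}c_i$.
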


\begin{proof}
By Equation~(\ref{eqn-AAi-d}), we can find the polynomial of $A_i$ in $A$ recursively for $2\leqslant i\leqslant d$. So $J$ is a polynomial in $A$ of degree $d$. Since $A\j=k\j$, we have $AJ=kJ$. Hence $f(A)(A-kI)=J(A-kI)=\mathbf{0}$. Note the degree of $f(A)(A-kI)$ is $d+1$. Thus the minimal polynomial of $A$ is obtained.
\end{proof}

By Equation~(\ref{eqn-AAi-d}) and Lemma~\ref{lem-mim-poly-degree-i}, we list several $A_i$'s as follows.
\begin{eqnarray*}
f_2(A)=c_2A_2&=&A^2-a_1A-kI,\\
f_3(A)=c_2c_3A_3&=&A^3-(a_1+a_2)A^2+(a_1a_2-b_1c_2-k)A+ka_2I,\\
f_4(A)=c_2c_3c_4A_4&=&A^4-(a_1+a_2+a_3)A^3+(a_1a_2+a_1a_3+a_2a_3-b_1c_2\\
&&-b_2c_3-k)A^2+(ka_2+ka_3+b_1c_2a_3+a_1b_2c_3\\
&&-a_1a_2a_3)A+(kb_2c_3-ka_2a_3)I.
\end{eqnarray*}
We pick the item $f_d(A)=\left(\prod\limits_{i=1}^{d}c_i\right)A_{d}$ in this list. For example, when $d=2$, then $f_3(A)$ is the minimal polynomial of $A$; when $d=3$, then $f_4(A)$ is the minimal polynomial of $A$. We may try to prove that $f_{d+1}(A)$ is the minimal polynomial of $A$ when the diameter is $d$. It is easy to check that $x=k$ is a root of $f_{d+1}(x)$. So all the other eigenvalues of $A$ are the roots of $\beta(x)=\frac{f_{d+1}(x)}{x-k}$. By Theorem~20.7 in~\cite{AGT-Biggs}, a distance-regular graph with diameter $d$ has just $d+1$ distinct eigenvalues which are $x=k$ and the roots of $\beta(x)$.

Using notations as in Section~\ref{sec-drg-dm} and Hypothesis~\ref{hypo-linearSys}, we have \begin{center}$x_{0,i}=i$, $x_{1,i}=\delta_i$, $x_{2,i}=\zeta_i$ and $x_{3,i}=\phi_i$ for $0\leqslant i\leqslant d$;\\ $X_0=D=\sum\limits_{i=0}^diA_i$, $X_1=\sum\limits_{i=0}^d\delta_iA_i$, $X_2=\sum\limits_{i=0}^d\zeta_iA_i$, $X_3=\sum\limits_{i=0}^d\phi_iA_i$;\\ $AX_0=kX_0+X_1$, $AX_1=kX_1+X_2$, $AX_2=kX_2+X_3$;\\ $A^2X_0=k^2X_0+2kX_1+X_2$ and $A^3X_0=k^3X_0+3k^2X_1+3kX_2+X_3$.\end{center} Generally, we have the following result.

\begin{lemma}\label{lem-AXi}
We use notations as in Hypothesis~\ref{hypo-linearSys}. Then
\begin{enumerate}
\item[(1)] $AX_i=kX_i+X_{i+1}$ for $0\leqslant i\leqslant d-1$,
\item[(2)] $A^iX=\sum\limits_{j=0}^i\binom{i}{j}k^{i-j}X_j=\sum\limits_{h=0}^d\alpha_{h,i}A_h$ for $0\leqslant i\leqslant d$.
\end{enumerate}
\end{lemma}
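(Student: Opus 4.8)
The plan is to establish part~(1) by a direct coefficient comparison and then obtain part~(2) by induction on $i$ built on part~(1), closing with a substitution of the definition of $X_j$.

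For part~(1), I would start from $AX_i=\sum_{j=0}^d x_{i,j}\,AA_j$ and insert the three-term relation $AA_j=b_{j-1}A_{j-1}+a_jA_j+c_{j+1}A_{j+1}$ from Equation~(\ref{eqn-AAi-d}). Collecting the coefficient of a fixed $A_m$ picks up three contributions: $x_{i,m+1}b_m$ from the $A_{j-1}$ term (with $j=m+1$), $x_{i,m}a_m$ from the $A_j$ term, and $x_{i,m-1}c_m$ from the $A_{j+1}$ term (with $j=m-1$). Writing $a_m=k-b_m-c_m$, which holds by Hypothesis~\ref{hypo-linearSys}(1), this coefficient becomes $k\,x_{i,m}+(x_{i,m+1}-x_{i,m})b_m-(x_{i,m}-x_{i,m-1})c_m$, and the last two terms are precisely $x_{i+1,m}$ by the recurrence in Hypothesis~\ref{hypo-linearSys}(3). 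Summing over $0\leqslant m\leqslant d$ then gives $AX_i=kX_i+X_{i+1}$.

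For part~(2), I would prove the first equality $A^iX=\sum_{j=0}^i\binom{i}{j}k^{i-j}X_j$ by induction on $i$. The base case $i=0$ is just $X=X_0$. For the inductive step I apply $A$ to $A^iX$ and use part~(1) term by term as $AX_j=kX_j+X_{j+1}$ (legitimate since the indices $j$ in the sum run only up to $i\leqslant d-1$). Splitting into two sums, reindexing the $X_{j+1}$ sum, and merging via Pascal's rule $\binom{i}{j}+\binom{i}{j-1}=\binom{i+1}{j}$ produces the claimed identity with $i$ replaced by $i+1$. The second equality then follows by substituting $X_j=\sum_{h=0}^d x_{j,h}A_h$ and interchanging the order of summation, so that the coefficient of $A_h$ becomes $\sum_{j=0}^i\binom{i}{j}k^{i-j}x_{j,h}=\alpha_{h,i}$ by the definition in Hypothesis~\ref{hypo-linearSys}(7).

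The computations are routine, so the only place requiring care is the boundary bookkeeping in part~(1): one must check the coefficient formula separately at $m=0$ and $m=d$, where $c_0=b_d=0$ and the conventions $x_{i,-1}=x_{i,d+1}=0$ together with $a_0=0$ and $a_d=k-c_d$ are used. I expect these edge cases, rather than any conceptual difficulty, to be the main thing to verify.
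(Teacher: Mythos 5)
Your proposal is correct and follows essentially the same route as the paper: part~(1) by reindexing the three-term recurrence $AA_j=b_{j-1}A_{j-1}+a_jA_j+c_{j+1}A_{j+1}$, writing $a_j=k-b_j-c_j$, and invoking the defining recurrence for $x_{i+1,j}$; part~(2) by induction via Pascal's rule and then substituting $X_j=\sum_{h=0}^d x_{j,h}A_h$ to identify the coefficients $\alpha_{h,i}$. The boundary cases you flag are exactly those the paper absorbs through the conventions $A_{-1}=A_{d+1}=\mathbf{0}$, $x_{i,-1}=x_{i,d+1}=0$, $c_0=b_d=0$ in Hypothesis~\ref{hypo-linearSys}, so nothing further is needed.
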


\begin{proof}
\begin{enumerate}
\item[(1)] Let $0\leqslant i\leqslant d-1$. Then
\begin{eqnarray*}
AX_i&=&\sum\limits_{j=0}^dx_{i,j}AA_j\\
&=&\sum\limits_{j=0}^dx_{i,j}(b_{j-1}A_{j-1}+a_jA_j+c_{j+1}A_{j+1})\\
&=&\sum\limits_{j=0}^d(x_{i,j-1}c_j+x_{i,j}a_j+x_{i,j+1}b_j)A_j\\
&=&\sum\limits_{j=0}^d[kx_{i,j}+(x_{i,j+1}-x_{i,j})b_j-(x_{i,j}-x_{i,j-1})c_j]A_j\\
&=&\sum\limits_{j=0}^dkx_{i,j}A_j+\sum\limits_{j=0}^dx_{i+1,j}A_j\\
&=&kX_i+X_{i+1}.
\end{eqnarray*}
\item[(2)] We prove the first equality by induction on $i$. Suppose for $i<d$, we have $A^iX=\sum\limits_{j=0}^i\binom{i}{j}k^{i-j}X_j$. Then

\begin{eqnarray*}
A^{i+1}X&=&\sum\limits_{j=0}^i\binom{i}{j}k^{i-j}(kX_j+X_{j+1})\\
&=&\binom{i}{0}k^{i+1}X_0+\sum\limits_{j=0}^{i-1}\left[\binom{i}{j}+\binom{i}{j+1}\right]k^{i-j}X_{j+1}+\binom{i}{i}k^0X_{i+1}\\
&=&\binom{i+1}{0}k^{i+1}X_0+\sum\limits_{j=0}^{i-1}\binom{i+1}{j+1}k^{(i+1)-(j+1)}X_{j+1}+\binom{i+1}{i+1}k^0X_{i+1}\\
&=&\sum\limits_{j=0}^{i+1}\binom{i+1}{j}k^{i+1-j}X_j.
\end{eqnarray*}

Let $0\leqslant i\leqslant d$. We have \begin{center}$A^iX=\sum\limits_{j=0}^i\binom{i}{j}k^{i-j}X_j
=\sum\limits_{h=0}^d\sum\limits_{j=0}^i\binom{i}{j}k^{i-j}x_{j,h}A_h=\sum\limits_{h=0}^d\alpha_{h,i}A_h$.\end{center}
\end{enumerate}
\end{proof}

By Hypothesis~\ref{hypo-linearSys}, $X$ is a polynomial of $A$. If the matrix $X$ is invertible, by Lemma~\ref{lem-mim-poly-degree-i}, the inverse of $X$ is a polynomial in $A$ of degree at most $d$. Thus the matrix $X$ is invertible if and only if there exist $y_0,y_1,y_2,\ldots,y_d$ such that the following equation holds
\begin{equation}\label{eqn-yiAiX=I}
\sum\limits_{i=0}^dy_iA^iX=I.
\end{equation}
In this case, the inverse is $X^{-1}=\sum\limits_{i=0}^dy_iA^i$. We consider Equation~(\ref{eqn-yiAiX=I}). Let
\begin{equation}\label{eqn-zj}
z_j=\sum\limits_{i=j}^d\binom{i}{j}k^{i-j}y_i
\end{equation}
for $0\leqslant j\leqslant d$. By M\"{o}bius inversion formula, we have \begin{center}$y_i=\sum\limits_{j=i}^d(-1)^{j-i}\binom{j}{i}k^{j-i}z_j$ for $0\leqslant i\leqslant d$.\end{center}

\begin{lemma}\label{lem-eqn-yiAiX-zjxjhAh}
We use notations as in Hypothesis~\ref{hypo-linearSys}. Suppose $y_0,y_1,y_2,\ldots,y_d$ is a solution of Equation~(\ref{eqn-yiAiX=I}), and we let $z_0,z_1,z_2,\ldots,z_d$ be defined as in Equation~(\ref{eqn-zj}). For $0\leqslant h\leqslant d$, let
\begin{eqnarray}
L_h&=&\sum\limits_{i=0}^dy_i\alpha_{h,i},\label{eqn-Lh}\\
M_h&=&\sum\limits_{j=0}^dz_jx_{j,h}.\label{eqn-Mh}
\end{eqnarray}
Then
\begin{equation}\label{eqn-yiAiX-LhAh-MhAh}
\sum\limits_{i=0}^dy_iA^iX=\sum\limits_{h=0}^dL_hA_h=\sum\limits_{h=0}^dM_hA_h.
\end{equation}
\end{lemma}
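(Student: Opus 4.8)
The plan is to prove the two displayed equalities in Equation~(\ref{eqn-yiAiX-LhAh-MhAh}) separately, since each reduces to a single earlier identity followed by an interchange in the order of a finite double summation. No appeal to the hypothesis that $y_0,\ldots,y_d$ actually solve Equation~(\ref{eqn-yiAiX=I}) is required: the chain of equalities is a formal identity relating the $y_i$, the $z_j$ defined by Equation~(\ref{eqn-zj}), and the coefficients $L_h,M_h$, so I would state the argument in that generality and remark that the solution hypothesis is only needed for how the lemma is later applied.

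For the first equality $\sum_{i=0}^d y_iA^iX=\sum_{h=0}^d L_hA_h$, I would invoke Lemma~\ref{lem-AXi}(2), which gives $A^iX=\sum_{h=0}^d\alpha_{h,i}A_h$ for $0\leqslant i\leqslant d$. Substituting this into $\sum_{i=0}^d y_iA^iX$ and interchanging the two finite sums yields $\sum_{i=0}^d y_iA^iX=\sum_{h=0}^d\left(\sum_{i=0}^d y_i\alpha_{h,i}\right)A_h$, and the inner coefficient is exactly $L_h$ by Equation~(\ref{eqn-Lh}). This step is purely formal.

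For the second equality I would show that $L_h=M_h$ for each fixed $h$, which immediately gives $\sum_{h=0}^d L_hA_h=\sum_{h=0}^d M_hA_h$ coefficient by coefficient. Starting from $L_h=\sum_{i=0}^d y_i\alpha_{h,i}$ and expanding $\alpha_{h,i}=\sum_{j=0}^i\binom{i}{j}k^{i-j}x_{j,h}$ from part~(7) of Hypothesis~\ref{hypo-linearSys}, I would rewrite the double sum over the triangular index region $0\leqslant j\leqslant i\leqslant d$ and swap the order of summation to obtain $L_h=\sum_{j=0}^d x_{j,h}\left(\sum_{i=j}^d\binom{i}{j}k^{i-j}y_i\right)$. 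The inner sum is precisely $z_j$ as defined in Equation~(\ref{eqn-zj}), so $L_h=\sum_{j=0}^d z_jx_{j,h}=M_h$ by Equation~(\ref{eqn-Mh}).

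The only (mild) obstacle is the bookkeeping in this interchange: one must correctly track the constraint $j\leqslant i$ so that, after swapping, the inner sum over $i$ runs from $i=j$ to $i=d$, which is exactly the range appearing in the definition of $z_j$. I would also note two points that streamline the write-up. First, the M\"obius-inversion relation between the $y_i$ and the $z_j$ is never used here, only the forward definition~(\ref{eqn-zj}), so this lemma is logically independent of that inversion. Second, because $L_h=M_h$ holds as a scalar identity for every $h$, the equality of the matrix sums $\sum_h L_hA_h=\sum_h M_hA_h$ follows term by term and does not require the linear independence of $A_0,\ldots,A_d$ guaranteed by Lemma~\ref{lem-mim-poly-degree-i} (that independence is instead what will later let us pass from the matrix equation to the scalar system $L_h=M_h=\delta_{h,0}$).
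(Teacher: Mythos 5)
Your proof is correct and follows essentially the same route as the paper: substitute $A^iX=\sum_{h=0}^d\alpha_{h,i}A_h$ from Lemma~\ref{lem-AXi}(2), interchange the finite sums to identify $L_h$, then expand $\alpha_{h,i}$ via Hypothesis~\ref{hypo-linearSys}(7) and swap the triangular double sum so the inner sum becomes $z_j$, giving $M_h$. Your side remarks (that the solution hypothesis and the M\"obius inversion are not actually used, and that linear independence of the $A_h$ is not needed here) are accurate and consistent with the paper's argument.
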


\begin{proof}
We have
\begin{eqnarray*}
\sum\limits_{i=0}^dy_iA^iX&=&\sum\limits_{i=0}^dy_i\sum\limits_{h=0}^d\alpha_{h,i}A_h
=\sum\limits_{h=0}^d\biggl(\sum\limits_{i=0}^dy_i\alpha_{h,i}\biggr)A_h\\
&=&\sum\limits_{h=0}^d\biggl(\sum\limits_{i=0}^dy_i\sum\limits_{j=0}^i\binom{i}{j}k^{i-j}x_{j,h}\biggr)A_h\\
&=&\sum\limits_{h=0}^d\sum\limits_{j=0}^d\biggl(\sum\limits_{i=j}^d\binom{i}{j}k^{i-j}y_i\biggr)x_{j,h}A_h
=\sum\limits_{h=0}^d\biggl(\sum\limits_{j=0}^dz_jx_{j,h}\biggr)A_h.
\end{eqnarray*}
\end{proof}

By Equation~\ref{eqn-yiAiX-LhAh-MhAh}, we consider the following linear systems. We denote $\mathbf{L}$ the following system of linear equations
\begin{equation*}
\left\{\begin{array}{l}L_0=1, \\ L_i=0~(1\leqslant i\leqslant d)\end{array}\right.
\end{equation*}
in $y_0,y_1,y_2,\ldots,y_d$, so the coefficient matrix of $L$ is $P=(\alpha_{h,i})_{0\leqslant h\leqslant d,~0\leqslant i\leqslant d}$. We denote $\mathbf{M}$ the following system of linear equations
\begin{equation*}
\left\{\begin{array}{l}M_0=1, \\ M_i=0~(1\leqslant i\leqslant d)\end{array}\right.
\end{equation*}
in $z_0,z_1,z_2,\ldots,z_d$, so the coefficient matrix of $M$ is $Q$.

\newpage

By the above discussion and the theory of linear systems, we have the following result.

\begin{theorem}\label{thm-equivalence}
We use notations as in Hypothesis~\ref{hypo-linearSys}. The following statements are equivalent.
\begin{enumerate}
\item[(1)] The matrix $X$ is invertible.
\item[(2)] There exist $y_0,y_1,y_2,\ldots,y_d$ such that Equation~(\ref{eqn-yiAiX=I}) holds.
\item[(3)] The system of linear equations $\mathbf{L}$ has a solution.
\item[(4)] The system of linear equations $\mathbf{M}$ has a solution.
\item[(5)] The matrix $P$ is invertible.
\item[(6)] The matrix $Q$ is invertible.
\item[(7)] The determinant $\det(P)\neq 0$.
\item[(8)] The determinant $\det(Q)\neq 0$.
\end{enumerate}
\end{theorem}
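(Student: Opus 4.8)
The plan is to split the eight conditions into two groups—the matrix-equation conditions (1)--(4) and the linear-algebra conditions (5)--(8)—prove that each group is internally equivalent, and then bridge the groups with a single link. The tool used throughout is that $A_0,A_1,\ldots,A_d$ are linearly independent: by Lemma~\ref{lem-mim-poly-degree-i} each $A_i$ is a polynomial in $A$ of degree exactly $i$ while the minimal polynomial of $A$ has degree $d+1$, so $A_0,A_1,\ldots,A_d$ form a basis of the $(d+1)$-dimensional space of polynomials in $A$. Consequently a relation $\sum_{h=0}^d c_hA_h=I$ holds if and only if $c_0=1$ and $c_h=0$ for $1\leqslant h\leqslant d$, and likewise $I,A,\ldots,A^d$ are linearly independent.

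For the first group, the equivalence $(1)\Leftrightarrow(2)$ is exactly the discussion preceding Lemma~\ref{lem-eqn-yiAiX-zjxjhAh}: $X$ is a polynomial in $A$, so if it is invertible its inverse is again a polynomial in $A$ of degree at most $d$, which produces a solution $y_0,\ldots,y_d$ of Equation~(\ref{eqn-yiAiX=I}); conversely such a solution exhibits a left inverse of $X$, which is two-sided since all matrices are square. For $(2)\Leftrightarrow(3)$ I invoke Lemma~\ref{lem-eqn-yiAiX-zjxjhAh}, which rewrites $\sum_{i=0}^d y_iA^iX=\sum_{h=0}^d L_hA_h$; by the linear independence above this equals $I$ precisely when $(L_0,L_1,\ldots,L_d)=(1,0,\ldots,0)$, i.e.\ when $y$ solves $\mathbf{L}$. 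For $(3)\Leftrightarrow(4)$, the substitution of Equation~(\ref{eqn-zj}) is an invertible change of variables (its inverse is the stated M\"{o}bius/binomial inversion), and Lemma~\ref{lem-eqn-yiAiX-zjxjhAh} shows $L_h=M_h$ under this correspondence, so $\mathbf{L}$ is solvable in the $y$'s if and only if $\mathbf{M}$ is solvable in the $z$'s.

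For the second group, $(5)\Leftrightarrow(7)$ and $(6)\Leftrightarrow(8)$ merely record that a square matrix is invertible if and only if its determinant is nonzero. To connect $P$ and $Q$, I write the change of variables as $z=Ty$, where $T$ has entries $\binom{i}{j}k^{i-j}$ for $i\geqslant j$ and is upper triangular with unit diagonal, so $\det T=1$; since $L=Py$ and $M=Qz$ while $L_h=M_h$, we get $Py=QTy$ for all $y$, hence $P=QT$ and $\det P=\det Q$, which yields $(5)\Leftrightarrow(6)$ and $(7)\Leftrightarrow(8)$. It remains to bridge the two groups. Here $(5)\Rightarrow(3)$ is immediate, since an invertible $P$ gives the solution $y=P^{-1}(1,0,\ldots,0)^T$ of $\mathbf{L}$.

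The main obstacle is the converse implication $(3)\Rightarrow(5)$: solvability of the square inhomogeneous system $\mathbf{L}$ does not by itself force $P$ to be invertible. The resolution is that a solution of $\mathbf{L}$, whenever one exists, is necessarily unique. Indeed, by $(3)\Rightarrow(2)\Rightarrow(1)$ such a solution produces $X^{-1}=\sum_{i=0}^d y_iA^i$, and since $I,A,\ldots,A^d$ are linearly independent the coefficients $y_i$ are uniquely determined by $X^{-1}$. A square linear system possessing a unique solution for some right-hand side has an invertible coefficient matrix, because the associated homogeneous system then has only the trivial solution; hence $P$ is invertible and $(3)\Rightarrow(5)$ holds. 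The same uniqueness argument, transported through the invertible $T$, gives $(4)\Rightarrow(6)$. Assembling $(1)\Leftrightarrow(2)\Leftrightarrow(3)\Leftrightarrow(4)$, $(5)\Leftrightarrow(6)\Leftrightarrow(7)\Leftrightarrow(8)$, and the link $(3)\Leftrightarrow(5)$ closes the equivalence of all eight statements.
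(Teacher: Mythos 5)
Your proof is correct, and it follows the same skeleton the paper relies on: the discussion preceding Equation~(\ref{eqn-yiAiX=I}) for $(1)\Leftrightarrow(2)$, Lemma~\ref{lem-eqn-yiAiX-zjxjhAh} together with the linear independence of $A_0,A_1,\ldots,A_d$ for $(2)\Leftrightarrow(3)\Leftrightarrow(4)$, and the unit-triangular change of variables $z=Ty$ for passing between $P$ and $Q$. The difference is one of completeness rather than of route: the paper's entire proof is the sentence ``by the above discussion and the theory of linear systems,'' and that phrase conceals the only implication that is not routine, namely $(3)\Rightarrow(5)$ (equivalently $(4)\Rightarrow(6)$), since a solvable square inhomogeneous system need not have an invertible coefficient matrix. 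You supply exactly the missing argument: any solution of $\mathbf{L}$ produces the inverse $X^{-1}=\sum_{i=0}^d y_iA^i$, and because $I,A,\ldots,A^d$ are linearly independent (the minimal polynomial of $A$ has degree $d+1$ by Lemma~\ref{lem-mim-poly-degree-i}) the coefficients $y_i$ are forced, so the solution of $\mathbf{L}$ is unique and $P$ must be invertible. Your factorization $P=QT$ with $T$ unit upper triangular, giving $\det P=\det Q$, is likewise a clean way to tie $(5)$--$(8)$ together that the paper never writes down. In short: same approach as the paper, but your version closes the gap that the paper's appeal to ``the theory of linear systems'' leaves unjustified.
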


Suppose $\det (Q)\neq 0$. Then by Theorem~\ref{thm-equivalence}, the matrix $X$ is invertible. By Cramer's rule, we have a solution of $\mathbf{M}$, that is $z_j=\frac{\det(Q_j)}{\det(Q)}$ for $0\leqslant j\leqslant d$, where $Q_j$ is the matrix obtained from $Q$ by replacing its $j$-th column by $[1,0,0,\ldots,0]^T$. So
\begin{equation}\label{eqn-yi-Qj}
y_i=\sum\limits_{j=i}^d(-1)^{j-i}\binom{j}{i}k^{j-i}\frac{\det(Q_j)}{\det(Q)} \text{ for $0\leqslant i\leqslant d$}.
\end{equation}
Thus \begin{center}$\sum\limits_{i=0}^d\sum\limits_{j=i}^d(-1)^{j-i}\binom{j}{i}k^{j-i}\det(Q_j)A^iX=\det (Q)I$.\end{center} The inverse of $X$ is $X^{-1}=\sum\limits_{i=0}^dy_iA^i=\sum\limits_{i=0}^d\biggl(\sum\limits_{j=i}^d(-1)^{j-i}\binom{j}{i}k^{j-i}\frac{\det(Q_j)}{\det(Q)}\biggr)A^i$. Thus we have the following result.

\begin{theorem}\label{thm-inv-drg}
Using notations as in Hypothesis~\ref{hypo-linearSys}. Let $Q_j$ be the matrix obtained from $Q$ by replacing its $j$-th column by $[1,0,0,\ldots,0]^T$ for $0\leqslant j\leqslant d$. Then $X$ is invertible if and only if $\det(Q)\neq 0$. Furthermore, suppose $\det (Q)\neq 0$. Let $y_i$ ($0\leqslant i\leqslant d$) be defined as in Equation~(\ref{eqn-yi-Qj}). Then the inverse of $X$ is \begin{center}$X^{-1}=\sum\limits_{i=0}^dy_iA^i=\sum\limits_{i=0}^d\biggl(\sum\limits_{j=i}^d(-1)^{j-i}\binom{j}{i}k^{j-i}\frac{\det(Q_j)}{\det(Q)}\biggr)A^i$.\end{center}
\end{theorem}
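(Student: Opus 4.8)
The plan is to derive both assertions directly from the apparatus already built in this section, handling them as two separate claims. For the equivalence ``$X$ is invertible if and only if $\det(Q)\neq 0$'' there is nothing new to do: it is precisely the equivalence of items (1) and (8) of Theorem~\ref{thm-equivalence}, so I would simply invoke that theorem for this half.

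For the explicit inverse I would begin from the fact, noted before Equation~(\ref{eqn-yiAiX=I}) via Lemma~\ref{lem-mim-poly-degree-i}, that once $X$ is invertible its inverse is a polynomial in $A$ of degree at most $d$; hence computing $X^{-1}=\sum_{i=0}^d y_i A^i$ reduces to solving Equation~(\ref{eqn-yiAiX=I}). By Lemma~\ref{lem-eqn-yiAiX-zjxjhAh} the left-hand side of that equation equals $\sum_{h=0}^d M_h A_h$ with $M_h=\sum_{j=0}^d z_j x_{j,h}$. Since $A_0,A_1,\ldots,A_d$ are polynomials in $A$ of the distinct degrees $0,1,\ldots,d$ by Lemma~\ref{lem-mim-poly-degree-i} and the minimal polynomial of $A$ has degree $d+1$, the change-of-basis matrix from $\{A_h\}$ to $\{I,A,\ldots,A^d\}$ is triangular with nonzero diagonal, so $\{A_h\}$ is a basis and coefficient matching is legitimate. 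Equating $\sum_{h=0}^d M_h A_h$ with $I=A_0$ is therefore equivalent to the linear system $\mathbf{M}$ in the unknowns $z_0,\ldots,z_d$, whose coefficient matrix is $Q$ and whose right-hand side is $[1,0,\ldots,0]^T$.

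Assuming $\det(Q)\neq 0$, I would then apply Cramer's rule to $\mathbf{M}$ to obtain the unique solution $z_j=\det(Q_j)/\det(Q)$, with $Q_j$ as defined in the statement, and finally pass back to the $y_i$ through the M\"obius-inversion relation $y_i=\sum_{j=i}^d(-1)^{j-i}\binom{j}{i}k^{j-i}z_j$ recorded after Equation~(\ref{eqn-zj}). Substituting $z_j=\det(Q_j)/\det(Q)$ yields exactly Equation~(\ref{eqn-yi-Qj}), and plugging these $y_i$ into $X^{-1}=\sum_{i=0}^d y_i A^i$ gives the claimed formula.

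Because every ingredient is already in place, I expect no serious difficulty. The one point I would take care to make explicit, rather than leave implicit, is the linear independence of $A_0,\ldots,A_d$ that licenses matching coefficients against the basis $\{A_h\}$; this is the only step where the degree-$(d+1)$ hypothesis on the minimal polynomial of $A$ is genuinely used, and everything else is a bookkeeping assembly of Cramer's rule with the M\"obius inversion.
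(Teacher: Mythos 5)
Your proposal is correct and follows essentially the same route as the paper: the invertibility criterion is read off from Theorem~\ref{thm-equivalence}, Cramer's rule applied to the system $\mathbf{M}$ gives $z_j=\det(Q_j)/\det(Q)$, and M\"obius inversion recovers the $y_i$ of Equation~(\ref{eqn-yi-Qj}), yielding $X^{-1}=\sum_{i=0}^d y_i A^i$. Your one addition --- explicitly justifying coefficient matching via the linear independence of $A_0,\ldots,A_d$ (each a polynomial in $A$ of distinct degree, with the minimal polynomial of degree $d+1$) --- is a point the paper leaves implicit in the phrase ``by the theory of linear systems,'' and making it explicit is a small improvement rather than a different argument.
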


By Theorem~\ref{thm-inv-drg}, we will calculate $\det(Q)$ and the inverse matrix (when it exists) of the distance matrix of a strongly-regular graph systematically in the next section.


\section{Strongly-regular graphs revisited}\label{sec-srg-rev}

A strongly-regular graph with parameter $(n,k,a,c)$ is a distance-regular graph with intersection array $\{b_0=k,b_1=k-a-1;c_1=1,c_2=c\}$. Let $D$ be the distance matrix of this distance-regular graph. Let $X_0=D=\sum\limits_{i=0}^2iA_i$. Then $x_{0,j}=j$ for $j=0,1,2$. By simple calculation, we get $x_{i,j}$ for $i=1,2$ and $j=0,1,2$, and so \begin{center}$Q=\left(\begin{array}{cll} 0 & k & -k(a+2)\\ 1 & k-a-2 & (a+2-c-k)(k-a-1)+(a+2)\\ 2 & -c & c(k+c-a-2) \end{array}\right)$.\end{center} Hence
\begin{eqnarray*}
\det(Q) & = & k(4+2a-c-k)(2k+c-2a-2)\\ &=& c(2n-k-2)(4+2a-c-k)=c\alpha(k)\alpha(\theta)\alpha(\tau)\\
\det(Q_0) & = & c(4+2a-c-k),\\
\det(Q_1) & = & -4a-2a^2+4c+3ac-c^2+6k+4ak-3ck-2k^2,\\
\det(Q_2) & = & 4+2a-c-2k.
\end{eqnarray*}
Since
\begin{eqnarray*}
\det(Q)y_0 & = & \det(Q_0)-k\det(Q_1)+k^2\det(Q_2),\\
\det(Q)y_1 & = & \det(Q_1)-2k\det(Q_2),\\
\det(Q)y_2 & = & \det(Q_2),
\end{eqnarray*}
we have
\begin{eqnarray*}
\det(Q)[y_0-(c-k)y_2] & = & k(2+a-c)(2+2a-c-2k),\\
\det(Q)[y_1-(c-a)y_2] & = & k(c+2k-2a-2).
\end{eqnarray*}
So
\begin{eqnarray*}
&&\det(Q)[y_0I+y_1A+y_2A^2] \\
& = & \det(Q)[y_0-(c-k)y_2]I+\det(Q)[y_1-(c-a)y_2]A+\det(Q)y_2cJ\\
& = & k(2+a-c)(2+2a-c-2k)I+k(c+2k-2a-2)A+\det(Q_2)cJ.
\end{eqnarray*}

By the above calculation, we know \begin{center}$\det(Q)\neq 0$ if and only if $4+2a-c-k\neq 0$.\end{center} Suppose $\det(Q)\neq 0$, i.e., $4+2a-c-k\neq 0$. Then the inverse of the distance matrix is
\begin{eqnarray*}
D^{-1} & = & \frac{k(2+a-c)(2+2a-c-2k)I+k(c+2k-2a-2)A+\det(Q_2)cJ}{\det(Q)}\\
& = & \frac{(2+a-c)I-A}{k+c-2a-4}+\frac{c(2k+c-2a-4)}{k(k+c-2a-4)(2k+c-2a-2)}J.
\end{eqnarray*}
This coincides with Equation~(\ref{eqn-srg-inv}) in Section~\ref{sec-srg-dm}.


\section{The determinant of $Q$}\label{sec-detQ}

In this section, we consider the determinant of $Q$. In Section~\ref{sec-srg-rev}, we have
\begin{equation}\label{eqn-detQ-2}\det(Q)=c_2\alpha(k)\alpha(\theta)\alpha(\tau).
\end{equation} When the diameter is three, by calculation we will give a similar formula in the following. Hence we guess the determinant of $Q$ has a formula related to the eigenvalues of the distance matrix of the distance-regular graph.

We suppose the diameter is three, i.e., $d=3$. Let $X_0=D=\sum\limits_{i=0}^3iA_i$. Then $x_{0,j}=j$ for $j=0,1,2,3$. By simple calculation, we get $x_{i,j}$ for $i=1,2,3$ and $j=0,1,2,3$, and so \begin{center}$Q=\left(\begin{array}{clll} 0 & k & (b_1-1-k)k & x_{3,0}\\ 1 & b_1-1  & 1-b_1^2+b_1b_2-b_1c_2+k & x_{3,1}\\ 2 & b_2-c_2 & -b_2^2-c_2+b_1c_2+c_2^2-b_2c_3 & x_{3,2}\\ 3 & -c_3 & c_3(b_2-c_2+c_3) & x_{3,3} \end{array}\right)$,\end{center}
where
\begin{eqnarray*}
x_{3,0}&=&k(1-b_1^2+b_1b_2-b_1c_2+2k-b_1k+k^2),\\
x_{3,1}&=&-1-b_1+b_1^2+b_1^3-b_1b_2-b_1^2b_2-b_1b_2^2+2b_1^2c_2+b_1c_2^2-b_1b_2c_3-2k-k^2,\\
x_{3,2}&=&b_2^3+c_2-b_1^2c_2+b_2c_2+b_2^2c_2+c_2^2-2b_1c_2^2-b_2c_2^2-c_2^3+2b_2^2c_3+b_2c_3^2+c_2k,\\
x_{3,3}&=&c_3(-b_2^2-c_2+b_1c_2+c_2^2-2b_2c_3+c_2c_3-c_3^2).
\end{eqnarray*}

Note that $D=\alpha(A)=\frac{1}{c_2c_3}[3A^3+(2c_3-3a_1-3a_2)A^2+(3a_1a_2+c_2c_3-2a_1c_3-3b_1c_2-3k)A+(3ka_2-2kc_3)I]$. Then \begin{center}$\alpha(k)=\frac{k}{c_2c_3}(c_2c_3+2b_1c_3+3b_1b_2)$.\end{center} Let $f_4(x)$ be defined after Lemma~\ref{lem-mim-poly-degree-i}, i.e., $f_4(x)$ is the minimal polynomial of $A$, and let $\theta$, $\tau$ and $\eta$ be the roots of
\begin{equation*}
\beta(x)=\frac{f_4(x)}{x-k}=x^3+(c_3-a_1-a_2)x^2+(a_1a_2+c_2c_3-a_1c_3-b_1c_2-k)x+(a_2a_3-b_2c_3).
\end{equation*}
We know $\alpha(x)=\frac{3}{c_2c_3}\beta(x)-\frac{1}{c_2}r(x)$ where
\begin{center}$r(x)=x^2+(2c_2-a_1)x+(3c_2-k)$.\end{center}
Then $\alpha(y)=-\frac{1}{c_2}r(y)$ for any root $y$ of $\beta(x)$. Let
\begin{eqnarray*}
\pi &=&-3b_2+3b_1^2b_2+3b_2^2-6b_1b_2^2+6c_2-6b_1c_2+6b_1b_2c_2-3c_2^2-2c_3\\
&&+2b_1^2c_3+5b_2c_3-7b_1b_2c_3-3c_2c_3+5b_1c_2c_3-2b_2c_2c_3+2c_2^2c_3\\
&&+2c_3^2-2b_1c_3^2-c_2c_3^2+2b_1k-2b_1^2k-5b_2k+3b_1b_2k+2b_2^2k+7c_2k\\
&&-5b_1c_2k-2c_2^2k-3c_3k+b_1c_3k+3b_2c_3k-c_2c_3k+c_3^2k+b_1k^2\\
&&-2b_2k^2+2c_2k^2-c_3k^2.
\end{eqnarray*}
By calculation, we have
\begin{equation}\label{eqn-detQ-3}
\det(Q)=-k(c_2c_3+2b_1c_3+3b_1b_2)\pi=c_2^2c_3\alpha(k)\alpha(\theta)\alpha(\tau)\alpha(\eta).
\end{equation}

By Equation~(\ref{eqn-detQ-2}) and Equation~(\ref{eqn-detQ-3}), we may guess
\begin{equation}\label{eqn-detQ}\det(Q)=\left(\prod\limits_{i=1}^dc_i^{d+1-i}\right)\left(\prod\limits_{\text{$\lambda$ is an eigenvalue of $D$}}\lambda\right).\end{equation}

%

%



%
%


\begin{thebibliography}{99}





\bibitem{Bapat on DM and Lap} R.~B.~Bapat, S.~J.~Kirkland and M.~Neumann, On distance matrices and Laplacians, Linear Algebra Appl., 401 (2005) 193-209.


\bibitem{Bapat bidirected tree} R.~B.~Bapat, A.~K.~Lal and S.~Pati, The distance matrix of a bidirected tree, Electron. J. Linear Algebra, 18 (2009), 233-245.

\bibitem{Bapat inv of DM of a block graph} R.~B.~Bapat and S. Sivasubramanian, Inverse of the distance matrix of a block graph, Linear and Multillinear Algebra, 59 (2011) 1393-1397.


\bibitem{AGT-Biggs} N.~L.~Biggs, Algebraic graph theory (Second Edition), Cambridge University Press, New York, (1993).









\bibitem{Graham DM of a directed graph} R.~L.~Graham, A.~J.~Hoffman and H.~Hosoya, On the distance matrix of a directed graph, J. Graph Theory, 1 (1977), 85-88.

\bibitem{Graham DM Polynomials of trees} R.~L. Graham and L.~Lovasz, Distance matrix polynomials of trees, Adv. Math., 29 (1978) 60-88.

\bibitem{Graham Pollak on addressing problem LS} R.~L.~Graham and H.~O.~Pollak, On the addressing problem for loop switching, Bell Sys. Tech. J., 50 (1971) 2495-2519.

\bibitem{Hou Chen Inverse of cactoid graph} Y.~Hou and J.~Chen, Inverse of the distance matrix of a cactoid digraph, Linear Algebra Appl., 475 (2015), 1-10.

\bibitem{Hou Fang Sun Inverse of cycle-clique graph} Y.~Hou, A.~Fang and Y.~Sun, Inverse of the distance matrix of a cycle-clique graph, Linear Algebra Appl., 485 (2015), 33-46.

\bibitem{Hou Sun Inverse of bi-block graph} Y.~Hou and Y.~Sun, Inverse of the distance matrix of a bi-block graph, Linear Multilinear Algebra, 475 (2015), 1-10.





\bibitem{Zhou2017DMdistance-wellDefined} H.~Zhou, The inverse of the distance matrix of a distance well-defined graph, Linear Algebra Appl. 517 (2017) 11-29.


\bibitem{ZhouDing2017MixedBlockGraphs} H.~Zhou and Q.~Ding, On mixed block graphs, Linear Multilinear Algebra, 66 (2018) 1668-1680.


\bibitem{ZhouDing2016DMweightedTree} H.~Zhou and Q.~Ding, The distance matrix of a tree with weights on its arcs, Linear Algebra Appl. 511 (2016) 365-377.


\bibitem{ZhouDing2017DMweightedCactoidDigraph} H. Zhou, Q. Ding and R. Jia, Inverse of the distance matrix of a weighted cactoid digraph, Appl. Math. Comput., 362 (2019) 124552, 11 pp.




\end{thebibliography}
\end{document}